\newtheorem{theorem}{Theorem}[section]
\newtheorem{lemma}[theorem]{Lemma}
\theoremstyle{definition}
\theoremstyle{remark}
\newcommand{\cH}{{\mathcal{H}}}
\newcommand{\cK}{{\mathcal{K}}}
\newcommand{\fB}{{\mathfrak{B}}}
\newcommand{\fP}{{\mathfrak{P}}}
\newcommand{\fS}{{\mathfrak{S}}}
\newcommand{\bC}{{\mathbb{C}}}
\newcommand{\bR}{{\mathbb{R}}}
\newcommand{\Tr}{\mathrm{Tr}}
\newcommand{\rank}{{\mathrm{rank}}}
\title[Exposed positive maps]{Rank properties of exposed positive maps}
\author[M. Marciniak]{Marcin Marciniak}
\address{Institute of Theoretical Physics and Astrophysics, Gda{\'n}sk University,
Wita Stwosza 57, 80-952 Gda{\'n}sk, Po\-land}
\email{matmm@univ.gda.pl}
\date{February 21, 2012}
\keywords{positive maps, extremal, exposed, completely positive}
\subjclass[2010]{Primary: 47H07, 46L05, 15B48; Secondary: 52A05, 47B65, 81P40}
\thanks{Research supported partially by research grant of MNiSW N N202 208238 and South Africa -- Poland scientific cooperation project UID 72336.}
\begin{document}
\setlength{\baselineskip}{1.1\baselineskip}
\begin{abstract}
Let $\cK$ and $\cH$ be finite dimensional Hilbert spaces and let $\fP$ denote the cone of
all positive linear maps acting from $\fB(\cK)$ into $\fB(\cH)$. We show that each map of the form $\phi(X)=AXA^*$ or $\phi(X)=AX^\mathrm{T}A^*$ is an exposed point of $\fP$.
\end{abstract}
\maketitle

%%%%%%%%%%%%%%%%%%%%%%%%%%%%%%%%%%%%%%%%%%%%%%%%%%%%%%%%%%%%%%%%%%%%%%%%%%%%%%%%%%%%%%%

\section{Introduction}
%\subsection{Convex cones and face structure}
Let us start with seting up some notation and terminology. Assume $V$ is a finite dimensional normed 
space. A subset $C$ is a {\em convex cone} (or simply {\em cone}) in $V$ if $\alpha x+\beta 
y\in C$ for all $x,y\in C$ and $\alpha,\beta\in\bR_+$. A cone $V$ is {\em pointed} if $V\cap(-V)=\{0\}$.
A convex subcone $F\subset C$ is called a {\em face} if $x-y\in F$ implies $y\in F$ for every $y\in C$.
Any face $F$ such that $F\neq\{0\}$ and $F\neq C$ will be called {\em proper}.
A proper face $F$ is said to be {\em maximal} if for any face $G$ such that $F\subset G\subset C$ we have either
$G=F$ or $G=C$. If $K\subset C$ is any subset then by $F(K)$ we will denote the smallest face containing $K$.
If $K=\{x\}$ for some $x\in C$ then we will write $F(x)$ instead of $F(\{x\})$. An element $x\in C$ will be called
{\em extremal} if $F(x)=\overline{\bR_+}x$, where $\overline{\bR_+}$ is the set of all non-negative real numbers.
The set of all extremal elements will be denoted be $\mathrm{ext}\,C$. In the sequel we will need the following
\begin{lemma}\label{l:extF}
If $C$ is a cone and $F\subset C$ is a face then $\mathrm{ext}\,F=F\cap\mathrm{ext}\,C$.
\end{lemma}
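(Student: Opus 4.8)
The plan is to prove the two inclusions $\mathrm{ext}\,F\subseteq F\cap\mathrm{ext}\,C$ and $F\cap\mathrm{ext}\,C\subseteq\mathrm{ext}\,F$ separately, and the whole argument will rest on two structural observations about faces that I would isolate first. Write $F_C(x)$ for the smallest face of $C$ containing $x$ (this is the operation denoted $F(x)$ in the ambient cone) and $F_F(x)$ for the smallest face of $F$ containing $x$; recall that $x$ is extremal in a cone exactly when this smallest face is the ray $\overline{\bR_+}x$, and that every cone automatically contains $\overline{\bR_+}x$ for each of its points, so the inclusions $\overline{\bR_+}x\subseteq F_C(x)$ and $\overline{\bR_+}x\subseteq F_F(x)$ hold for free.

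The first observation (transitivity) is that a face of a face is again a face: if $F$ is a face of $C$ and $G$ is a face of $F$, then $G$ is a face of $C$. I would verify this straight from the definition --- given $x\in G$ and $y\in C$ whose difference $x-y$ lies in $C$, the face property of $F$ first forces $y\in F$ (and $x-y\in F$), after which the face property of $G$ inside $F$ forces $y\in G$. The second observation is the easy converse: any face of $C$ that happens to be contained in $F$ is a face of $F$, since the defining implication for $F$ is just the restriction to elements of $F$ of the one already available in $C$.

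With these in hand the two inclusions are short. For $\mathrm{ext}\,F\subseteq F\cap\mathrm{ext}\,C$, take $x\in\mathrm{ext}\,F$, so that $F_F(x)=\overline{\bR_+}x$. By transitivity this ray, being a face of $F$ with $F$ a face of $C$, is a face of $C$ containing $x$; minimality of $F_C(x)$ then gives $F_C(x)\subseteq\overline{\bR_+}x$, and combined with the free inclusion we obtain $F_C(x)=\overline{\bR_+}x$, i.e. $x\in\mathrm{ext}\,C$. For the reverse inclusion, take $x\in F$ with $F_C(x)=\overline{\bR_+}x$. Since $F$ is a face of $C$ containing $x$, minimality gives $\overline{\bR_+}x=F_C(x)\subseteq F$, so the ray is a face of $C$ lying inside $F$; by the second observation it is then a face of $F$ containing $x$, and minimality of $F_F(x)$ yields $F_F(x)\subseteq\overline{\bR_+}x$, hence equality and $x\in\mathrm{ext}\,F$.

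The only genuinely non-formal step is the transitivity observation, so I expect that to be where care is needed: one must keep straight which ambient cone the defining implication is being applied in, and check that the intermediate conclusion $y\in F$, $x-y\in F$ is legitimately in place before invoking the face property of $G$ within $F$. Everything else is bookkeeping with the minimality of the smallest-face operators.
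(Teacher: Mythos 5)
Your proposal is correct. It does, however, take a noticeably different route from the paper's. The paper proves the inclusion $\mathrm{ext}\,F\subset F\cap\mathrm{ext}\,C$ by a direct element-wise argument: given $x\in\mathrm{ext}\,F$ and a decomposition $y\in C$, $x-y\in C$, it uses the face property of $F$ once to get $y,x-y\in F$ and then extremality of $x$ in $F$ to conclude $y=\lambda x$; the reverse inclusion is dismissed as obvious. You instead work entirely at the level of the face lattice: you isolate as separate observations that a face of a face is a face and that a face of $C$ contained in $F$ is a face of $F$, and then derive both inclusions from minimality of the smallest-face operators $F_C(x)$ and $F_F(x)$. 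Your version is longer, but it has two genuine merits: it actually proves the ``obvious'' direction, and it stays literally within the paper's official definition of extremality ($F(x)=\overline{\bR_+}x$), whereas the paper's proof tacitly switches to the equivalent decomposition characterization (every $y\in C$ with $x-y\in C$ is a multiple of $x$) without comment. The paper's version is the more economical one for a lemma of this size; yours makes the bookkeeping explicit, and your own warning about which ambient cone the face property is being applied in is exactly the point where the paper's one-line argument also does its (silent) work.
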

\begin{proof}
Let $x\in\mathrm{ext}\,F$. We should show that $x\in\mathrm{ext}\,C$. Assume that $y\in C$ and $x-y\in C$. Since $x\in F$ and $F$ is a face, $y\in F$ and $x-y\in F$. Now, extremality of $x$ in $F$ implies $y=\lambda x$ for some nonnegative constant $\lambda$. Thus $x\in\mathrm{ext}\,C$. We proved $\mathrm{ext}\,F\subset F\cap\mathrm{ext}\,C$. The converse inclusion is obvious.
\end{proof}

Assume now that $W$ is another finite dimensional normed space, and $V$ and $W$ are dual to each other with respect to bilinear pairing $\langle\cdot,\cdot\rangle_{\mathrm{d}}$.
For a subset $C\subset V$ (respectively $D\subset W$) we define the {\em dual cone} $C^\circ=\{y\in W:\,\mbox{$\langle x,y\rangle_{\mathrm{d}}\geq 0$ for all $x\in C$}\}$ (respectively $D^\circ=\{x\in V:\,\mbox{$\langle x,y\rangle_{\mathrm{d}}\geq 0$ for all $y\in D$}\}$). One can show that $C^{\circ\circ}$ is the smallest closed cone containing $C$.

Assume $C\subset V$ is a closed convex cone and $F\subset C$ is a face. We define
$F'=\{y\in C^\circ:\,\mbox{$\langle x,y\rangle_{\mathrm{d}}=0$ for all $x\in F$}\}$.
It is clear that $F'$ is a closed face of $C^\circ$.
We say that a face $F$ of a closed convex cone $C$ is {\em exposed} if there exists $y_0\in C^\circ$
such that $F=\{x\in C:\,\langle x,y_0\rangle_{\mathrm{d}}=0\}$.
We have the following
\begin{lemma}[\cite{Kye00}]\label{l:Kye}
Let $F$ be a closed face of a closed convex cone $C$. Then $F$ is exposed if and only if 
$F=F''$. The set $F''$ is the smallest closed face containing $F$.
\end{lemma}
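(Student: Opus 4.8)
The plan is to establish the two non-trivial inclusions and the minimality statement by exhibiting a single explicit functional that exposes $F''$. First I would dispose of the easy inclusion: if $x\in F$, then $\langle x,y\rangle_{\mathrm{d}}=0$ for every $y\in F'$ by the very definition of $F'$, and since $C$ is closed we have $C^{\circ\circ}=C$, so $x\in (F')'=F''$. Hence $F\subseteq F''$ in all cases, and $F''$ is a closed face of $C$.

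The heart of the argument is to show that $F''$ is \emph{itself} exposed, which I would do by producing a functional that cuts it out exactly. Because $F'$ is a closed convex cone in the finite dimensional space $W$, its relative interior (relative to its linear span) is nonempty; I fix a point $y_0$ in this relative interior and claim $F''=\{x\in C:\langle x,y_0\rangle_{\mathrm{d}}=0\}$. The inclusion $\subseteq$ is immediate since $y_0\in F'$. For $\supseteq$, take $x\in C$ with $\langle x,y_0\rangle_{\mathrm{d}}=0$ and an arbitrary $y\in F'$. Since $y_0$ lies in the relative interior of the cone $F'$ and $y\in F'$, the line-segment property gives $(1+\varepsilon)y_0-\varepsilon y\in F'$ for all sufficiently small $\varepsilon>0$. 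As $x\in C$ and $F'\subseteq C^\circ$, pairing $x$ against $y$ and against $(1+\varepsilon)y_0-\varepsilon y$ yields $\langle x,y\rangle_{\mathrm{d}}\geq 0$ together with $(1+\varepsilon)\langle x,y_0\rangle_{\mathrm{d}}-\varepsilon\langle x,y\rangle_{\mathrm{d}}\geq 0$; using $\langle x,y_0\rangle_{\mathrm{d}}=0$ the latter forces $\langle x,y\rangle_{\mathrm{d}}\leq 0$, so $\langle x,y\rangle_{\mathrm{d}}=0$. As $y\in F'$ was arbitrary, $x\in F''$, proving the claim and showing $F''$ is exposed by $y_0\in C^\circ$.

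With this in hand the equivalence is quick. If $F=F''$ then $F$ is exposed by the previous step. Conversely, if $F$ is exposed, choose $y_1\in C^\circ$ with $F=\{x\in C:\langle x,y_1\rangle_{\mathrm{d}}=0\}$; then $\langle x,y_1\rangle_{\mathrm{d}}=0$ for all $x\in F$, so $y_1\in F'$, and any $x\in F''$ satisfies $\langle x,y_1\rangle_{\mathrm{d}}=0$, hence $x\in F$. Together with $F\subseteq F''$ this gives $F=F''$. The same idea delivers minimality: if $G$ is any exposed face with $F\subseteq G$, pick an exposing functional $z\in C^\circ$ for $G$; arguing as before $z\in F'$, so every $x\in F''$ pairs to zero with $z$ and therefore lies in $G$, i.e. $F''\subseteq G$. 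Thus $F''$ is the smallest exposed (in particular closed) face containing $F$.

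I expect the one genuinely delicate point to be the relative-interior step in the second paragraph: the fact that pairing against the single point $y_0$ already detects vanishing against all of $F'$. This rests on the line-segment inclusion $(1+\varepsilon)y_0-\varepsilon y\in F'$ and on finite dimensionality to guarantee that the relative interior is nonempty. Everything else is routine bookkeeping with the duality $C^{\circ\circ}=C$ and the definitions of $F'$ and exposedness.
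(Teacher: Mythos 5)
Your argument is correct and complete; note, however, that the paper itself gives no proof of this lemma --- it is quoted from the reference [Kye00] (Eom--Kye) --- so there is nothing in the source to compare against line by line. Your route is in fact the standard one for this duality statement: the only genuinely nontrivial point is exactly the one you isolate, namely that a single relative-interior point $y_0$ of the closed convex cone $F'$ already exposes $F''$, via the extension property $(1+\varepsilon)y_0-\varepsilon y\in F'$ and the bipolar identity $C^{\circ\circ}=C$; the edge case $F'=\{0\}$ is harmless since then $y_0=0$ and $F''=C$ is exposed by the zero functional. One discrepancy worth flagging: what you prove is that $F''$ is the smallest \emph{exposed} face containing $F$, whereas the lemma as printed says ``smallest \emph{closed} face.'' Your version is the correct one --- read literally, the printed phrasing would make $F$ itself (which is a closed face containing $F$) the minimizer, forcing $F=F''$ for every closed face and contradicting the first half of the lemma --- so treat ``closed'' there as a slip for ``exposed,'' which is also how the lemma is actually used later in the paper.
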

An element $x\in C$ is called an {\em exposed point} of $C$ if $\overline{\bR_+}x$ is an exposed face
of $F$. The set of all exposed points of $C$ will be denoted by $\mathrm{exp}\,C$.

%\section{Main result}
Now, let $\cK$ and $\cH$ be finite dimensional Hilbert spaces and $\dim\cK=m$, $\dim\cH=n$.
By $\fB(\cK)$ (respectively $\fB(\cH)$) we denote the $C^*$-algebra of all linear transformations
acting on $\cK$ (respectively $\cH$).
Let $V=\fB(\fB(\cK),\fB(\cH))$ be the space of linear mappings from $\fB(\cK)$ into $\fB(\cH)$
and let $W=\fB(\cH)\otimes\fB(\cK)$.
Assume also that some antilinear selfadjoint involutions $\cK\ni\xi\mapsto\overline{\xi}\in\cK$ and
$\cH\ni\eta\mapsto\overline{\eta}\in\cH$ are given.  
Following \cite{Sto86} (see also \cite{Kye00}) we define the following bilinear pairing between $V$ and $W$
\begin{equation}
\langle \phi,X\otimes Y\rangle_{\mathrm{d}}=\Tr\left(\phi(Y)X^{\mathrm{T}}\right)
\end{equation}
where $\phi\in V$, $X\in\fB(\cH)$, $Y\in\fB(\cK)$ and $\mathrm{T}$ is the transposition on $\fB(\cH)$ 
determined by the given antilinear selfadjoint involution on $\cH$.

Now, we choose some special convex cones $\fP\subset V$ and $\fS\subset W$. Namely $\fP$ consists of positive maps,
i.e. such maps $\phi$ that $\phi\left(\fB(\cK)_+\right)\subset\fB(\cH)_+$, while $\fS=\fB(\cH)_+\otimes\fB(\cK)_+$
(its elements are sometimes called unnormalized separable states). 
It is known that these cones are dual to each other, i.e. $\fS=\fP^\circ$ (e.g. \cite{MM01}).

The structure of extremal elements of $\fS$ is simple. For two vestors $x,y$ from a Hilbert space $\mathcal{X}$ we let $xy^*$ denote the rank $1$ operator on $X$ such that $(xy^*)z=\langle y,z\rangle x$ for $z\in\mathcal{X}$. It follows from the definition of $\fS$ that
$$\mathrm{ext}\,\fS=\{\xi\xi^*\otimes\eta\eta^*:\,\xi\in\cH,\,\eta\in\cK\}.$$

The big challenge is to describe the structure of extremal elements of the cone $\fP$. The full description
of the set $\mathrm{ext}\,\fP$ is still not done. Only some partial results are known. In \cite{Sto63} extremal elements in the convex set of unital maps acting from $\fB(\bC^2)$ into $\fB(\bC^2)$ are characterized. 
As regards the full cone $\fP$ of positive but not necessarily unital maps, it was proved in 
\cite{YH05} that every maps of the form 
\begin{equation}\label{r:dec}
\phi(X)=AXA^*\quad\mbox{or}\quad\phi(X)=AX^{\mathrm{T}}A^*,\qquad X\in\fB(\cK),
\end{equation} 
($A\in\fB(\cK,\cH)$) are extremal in $\fP$.
Moreover, several examples of non-decomposable extremal positive maps are scattered over the literature (see e.g. \cite{Cho75,Wor76,Rob85,Ha03,Bre06}).

Let us remind that due to Straszewicz's theorem (\cite{Str35}, see also \cite{Roc70}) the set $\mathrm{exp}\,\fP$ is dense in 
$\mathrm{ext}\,\fP$.
Thus, in order to do a full characterization of positive maps it is enough to describe fully the set 
of all exposed points of $\fP$. It was proved in \cite{YH05} that if $\rank A=1$ or $\rank A=m$ then 
a map of the form (\ref{r:dec}) is an exposed point of $C$. 
Recently, some new examples of exposed nondecomposable positive maps has appeared in the literature (see e.g. \cite{HK11,Chr11,ChS12a,SCh12,ChS12b}).

The aim of this paper is to provide some new examples of exposed positive maps. We will use results concerning rank properties of extremal positive maps described in \cite{Mar09}.

\section{Main result}
Now we are ready to formulate our main theorem.
\begin{theorem}\label{t}
Every map of the form (\ref{r:dec}) is an exposed point of $C$.
\end{theorem}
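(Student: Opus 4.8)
The plan is to exploit the duality between $\fP$ and $\fS=\fP^\circ$ together with Lemma~\ref{l:Kye}. Fix first the congruence case $\phi(X)=AXA^*$. Since $\phi$ is extremal by \cite{YH05}, Lemma~\ref{l:extF} gives that its smallest face is the ray $F=\overline{\bR_+}\phi$, so by Lemma~\ref{l:Kye} it suffices to prove $F''=F$. First I would compute the exposed face $F'\subset\fS$. Because $\fS$ is generated by its extremal rays $\xi\xi^*\otimes\eta\eta^*$, and $\phi(\eta\eta^*)=(A\eta)(A\eta)^*$ while $(\xi\xi^*)^{\mathrm T}=\overline{\xi}\,\overline{\xi}^*$, we have
\[
\langle\phi,\xi\xi^*\otimes\eta\eta^*\rangle_{\mathrm d}=\Tr\!\big((A\eta)(A\eta)^*\,\overline{\xi}\,\overline{\xi}^*\big)=\abs{\langle\overline{\xi},A\eta\rangle}^2 ,
\]
so an extremal ray of $\fS$ lies in $F'$ exactly when $\langle\overline{\xi},A\eta\rangle=0$.

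Next I would unwind the condition $\psi\in F''=\{\psi\in\fP:\langle\psi,S\rangle_{\mathrm d}=0\text{ for all }S\in F'\}$, using $\fP^{\circ\circ}=\fP$. Writing $\zeta=\overline{\xi}$ (which ranges over all of $\cH$), membership means $\langle\zeta,\psi(\eta\eta^*)\zeta\rangle=0$ whenever $\zeta\perp A\eta$. As $\psi$ is positive we have $\psi(\eta\eta^*)\ge0$, and a nonnegative quadratic form vanishing on a subspace vanishes on it as an operator; hence $\psi(\eta\eta^*)\zeta=0$ for every $\zeta\perp A\eta$. Equivalently, $\mathrm{ran}\,\psi(\eta\eta^*)\subseteq\spa\{A\eta\}$ for all $\eta$, together with $\psi(\eta\eta^*)=0$ whenever $\eta\in\ker A$. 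Thus every $\psi\in F''$ obeys $\psi(\eta\eta^*)=c(\eta)\,(A\eta)(A\eta)^*$ for some $c(\eta)\ge0$.

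The heart of the argument, and the step I expect to be the main obstacle, is to upgrade this pointwise rank‑one data to the global identity $\psi=\lambda\phi$. The difficulty is that a positive map is only Hermiticity‑preserving, not completely positive, so the image of a non‑Hermitian $\eta_1\eta_2^*$ need not have rank one, and classical rank‑one preserver theorems — together with their degenerate solutions such as $X\mapsto\Tr(X)\zeta\zeta^*$ — cannot be invoked directly. To get around this I would polarize: for $\eta_1,\eta_2$ with $A\eta_1,A\eta_2$ linearly independent, expand $\psi\big((\alpha\eta_1+\beta\eta_2)(\alpha\eta_1+\beta\eta_2)^*\big)$ and match it against $c(\alpha\eta_1+\beta\eta_2)(\alpha A\eta_1+\beta A\eta_2)(\alpha A\eta_1+\beta A\eta_2)^*$. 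Since the four operators $(A\eta_i)(A\eta_j)^*$ with $i,j\in\{1,2\}$ are linearly independent, comparing coefficients as $\alpha,\beta$ vary forces $c(\eta_1)=c(\eta_2)$ and pins the cross term to $\psi(\eta_1\eta_2^*)=c\,(A\eta_1)(A\eta_2)^*$. When $\rank A\ge2$ this makes $c$ a global constant $\lambda$, and a further rank‑counting step eliminates the cross terms involving $\ker A$ — this is precisely where the degenerate fixed‑direction solutions are excluded, using the condition $\psi(\eta\eta^*)=0$ on $\ker A$; the case $\rank A=1$ is already contained in \cite{YH05}. These rank estimates for extremal positive maps are exactly those developed in \cite{Mar09}. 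As the operators $\eta_1\eta_2^*$ span $\fB(\cK)$, this yields $\psi=\lambda\phi$, hence $F''=\overline{\bR_+}\phi$ and $\phi\in\mathrm{exp}\,\fP$ by Lemma~\ref{l:Kye}.

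Finally, the transpose case $\phi(X)=AX^{\mathrm T}A^*$ reduces to the one above. Transposition $T$ on $\fB(\cK)$ is a positive linear involution, so $\Lambda:\psi\mapsto\psi\circ T$ is a linear automorphism of $V$ carrying $\fP$ onto itself and, dually, $\fS$ onto itself; consequently $\Lambda$ transports the facial and exposure structure and maps exposed points to exposed points. Since $AX^{\mathrm T}A^*=(\phi_0\circ T)(X)$ for $\phi_0(X)=AXA^*$, the exposedness of $\phi_0$ established above gives the exposedness of $\phi=\Lambda(\phi_0)$. (Alternatively, one repeats the computation verbatim with $A\eta$ replaced by $A\overline{\eta}$ throughout.)
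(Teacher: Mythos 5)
Your argument is correct, and from the point where one knows $\psi(\eta\eta^*)=c(\eta)\,(A\eta)(A\eta)^*$ onward it takes a genuinely different route from the paper. The paper does not polarize: it observes that this condition makes $\psi$ rank~$1$ non-increasing and invokes Theorem~2.2 of \cite{Mar09}, which forces $\psi$ to have one of three forms ($\omega(X)Q$, $BXB^*$, or $BX^{\mathrm{T}}B^*$); it then disposes of each form separately --- Lemma~\ref{l:1to1} turns the congruence case into $\ker f_A\subset\ker f_B$, hence $B=\lambda A$; a rank argument on $R$ reduces the functional case to the congruence case; and Lemma~\ref{l:my} kills the transpose-congruence case when $\rank A\ge 2$. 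Your polarization bypasses all of this: evaluating the rank-one identity at $\alpha\eta_1+\beta\eta_2$ for $(\alpha,\beta)=(1,\pm 1),(1,\pm i)$ and using the linear independence of the four operators $(A\eta_i)(A\eta_j)^*$ does force $c(\eta_1)=c(\eta_2)$ and $\psi(\eta_1\eta_2^*)=c\,(A\eta_1)(A\eta_2)^*$; just note that ``comparing coefficients'' must be implemented by such a several-point evaluation, since $\psi(\eta_1\eta_2^*)$ is not a priori in the span of the $(A\eta_i)(A\eta_j)^*$, and that killing the cross terms $\psi(\eta\kappa^*)$ for $\kappa\in\ker A$ is really the standard positivity (Cauchy--Schwarz-type) consequence of $\psi(\kappa\kappa^*)=0$ rather than a rank count. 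What each approach buys: yours is self-contained and elementary, showing directly that every $\psi\in\{\phi\}''$ is a multiple of $\phi$ with no case analysis, at the cost of redoing by hand what \cite{Mar09} packages; the paper's is modular but imports a nontrivial classification theorem plus two auxiliary lemmas. Your reduction of the transpose case via the cone automorphism $\psi\mapsto\psi\circ\mathrm{T}$, whose dual $X\otimes Y\mapsto X\otimes Y^{\mathrm{T}}$ preserves $\fS$, is also cleaner and more explicit than the paper's ``the proof is similar.''
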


In the proof of the above theorem we will need more or less known two lemmas.
For any $A\in\fB(\cK,\cH)$ we denote $\Vert A\Vert_2=\left(\Tr(A^*A)\right)^{1/2}$.
\begin{lemma}\label{l:1to1}
There is a unique linear map $\fB(\cK,\cH)\ni A\mapsto f_A \in(\cH\otimes\cK)^*$ 
such that $f_A(\xi\otimes\eta)=\langle\overline{\xi},A\eta\rangle$ for any $\xi\in\cH$ and $\eta\in\cK$.
Moreover, we have $\Vert f_A\Vert=\Vert A\Vert_2$ for $A\in\fB(\cK,\cH)$.
\end{lemma}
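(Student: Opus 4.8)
The plan is to split the statement into two independent parts: first the existence, uniqueness and linearity of the assignment $A\mapsto f_A$, which I would obtain from the universal property of the tensor product, and then the isometry $\Vert f_A\Vert=\Vert A\Vert_2$, which I would read off after identifying $(\cH\otimes\cK)^*$ with $\cH\otimes\cK$ through the Riesz isomorphism.

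For the first part, I would check that the assignment $B(\xi,\eta)=\langle\overline{\xi},A\eta\rangle$ is bilinear on $\cH\times\cK$. Linearity in $\eta$ is immediate from linearity of $A$ and of the inner product in its second slot. Linearity in $\xi$ is the only point that uses the involution: the conjugation $\xi\mapsto\overline{\xi}$ is antilinear and the inner product is antilinear in its first slot, so the two conjugations cancel and $\xi\mapsto\langle\overline{\xi},A\eta\rangle$ is genuinely linear. The universal property of $\cH\otimes\cK$ then yields a unique linear functional $f_A\in(\cH\otimes\cK)^*$ with $f_A(\xi\otimes\eta)=B(\xi,\eta)$; uniqueness is forced because the simple tensors span $\cH\otimes\cK$. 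Linearity of $A\mapsto f_A$ follows by comparing the two sides on this spanning set.

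For the norm, I would use that for any functional $f$ on a finite dimensional Hilbert space and any orthonormal basis $\{g_k\}$ one has $\Vert f\Vert^2=\sum_k\abs{f(g_k)}^2$, which is Riesz representation together with Parseval's identity. Taking orthonormal bases $\{e_i\}$ of $\cH$ and $\{f_j\}$ of $\cK$, the family $\{e_i\otimes f_j\}$ is an orthonormal basis of $\cH\otimes\cK$, so $\Vert f_A\Vert^2=\sum_{i,j}\abs{\langle\overline{e_i},Af_j\rangle}^2$. Here I would invoke that the involution is an isometry of $\cH$, which is exactly what a selfadjoint antilinear involution provides since $\langle\overline{x},\overline{y}\rangle=\langle y,x\rangle$; consequently $\{\overline{e_i}\}$ is again an orthonormal basis. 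Applying Parseval in the index $i$ gives $\sum_i\abs{\langle\overline{e_i},Af_j\rangle}^2=\Vert Af_j\Vert^2$, and summing over $j$ yields $\sum_j\langle f_j,A^*Af_j\rangle=\Tr(A^*A)=\Vert A\Vert_2^2$.

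The argument is essentially routine, so I do not expect a genuine obstacle; the only place demanding care is the bookkeeping of the two antilinear operations, namely the conjugation and the first slot of the inner product, both when establishing bilinearity of $B$ and when verifying that $\{\overline{e_i}\}$ remains orthonormal. Everything else reduces to the universal property of the tensor product and a double application of Parseval's identity.
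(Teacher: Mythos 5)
Your argument is correct, and the first half (bilinearity of $(\xi,\eta)\mapsto\langle\overline{\xi},A\eta\rangle$ via cancellation of the two antilinearities, then the universal property of the tensor product) coincides with the paper's. Where you genuinely diverge is the norm identity: the paper proves $\Vert f_A\Vert\leq\Vert A\Vert_2$ by Cauchy--Schwarz on a generic vector $\sum_i\xi_i\otimes\eta_i$ with $\{\eta_i\}$ orthonormal, and then exhibits the explicit extremal vector $\sum_i\overline{A\eta_i}\otimes\eta_i$ to force equality; you instead identify $\Vert f\Vert^2=\sum_k\abs{f(g_k)}^2$ over an orthonormal basis via Riesz and Parseval, and evaluate the double sum $\sum_{i,j}\abs{\langle\overline{e_i},Af_j\rangle}^2=\Tr(A^*A)$ directly, using that a selfadjoint antilinear involution sends an orthonormal basis to an orthonormal basis. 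Your route is slightly slicker in that it produces the exact equality in one computation with no need to guess a maximizing vector, at the cost of invoking the finite-dimensional Riesz/Parseval identity for the norm of a functional. One further difference worth noting: the paper's proof also shows that every $f\in(\cH\otimes\cK)^*$ arises as $f_A$ for some $A$, i.e.\ surjectivity of $A\mapsto f_A$; this is not claimed in the statement of the lemma and is not needed for the main theorem (only injectivity is, and that follows from the norm equality), so its absence from your proof is not a gap.
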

\begin{proof}
Observe that $\cH\times \cK\ni(\xi,\eta)\mapsto\langle\overline{\xi},A\eta\rangle\in\bC$ is a bilinear
form. It follows from the universality property of a tensor product that this form has a unique
lift to a linear functional $f_A$ on the $\cH\otimes\cK$.
Now, if $f\in (\cH\otimes\cK)^*$ then we define $\varphi(\xi,\eta)=f(\overline{\xi}\otimes\eta)$ 
for $\xi\in\cH$ and $\eta\in\cK$. It is a sesquilinear form on $\cH\times\cK$, so there is 
$A\in\fB(\cK,\cH)$ such that $\varphi(\xi,\eta)=\langle\xi,A\eta\rangle$. Hence we have
$f(\xi\otimes\eta)=\varphi(\overline{\xi},\eta)=\langle\overline{\xi},A\eta\rangle=f_A(\xi\otimes\eta)$.

It remains to show the equality of norms. Let $\eta_1,\ldots,\eta_m$ be an orthonormal basis of $\cK$ and 
$\xi_1,\ldots,\xi_m$ be any system of vectors from $\cH$. Observe that 
$\left\Vert\sum_i\xi_i\otimes\eta_i\right\Vert^2=\sum_i\Vert\xi_i\Vert^2$ and $\Vert A\Vert_2^2=\sum_i\Vert A\eta_i\Vert^2$.
Thus
\begin{eqnarray*}
%\lefteqn{
\left|f_A\left(\sum_i\xi_i\otimes\eta_i\right)\right|%=}\\
&=&
\left|\sum_i\langle\overline{\xi_i},A\eta_i\rangle\right|\leq
\sum_i\left\Vert\overline{\xi_i}\right\Vert\Vert A\eta_i\Vert\leq \\
&\leq &
\left(\sum_i\Vert\xi_i\Vert^2\right)^{1/2}\left(\sum_i\Vert A\eta_i\Vert^2\right)^{1/2}= 
%\\&=&
\left\Vert\sum_i\xi_i\otimes\eta_i\right\Vert\Vert A\Vert_2.
\end{eqnarray*}
Hence $\Vert f_A\Vert\leq\Vert A\Vert_2$. Now, observe that 
%let $\xi_i=\Vert A\Vert_2^{-1}\overline{A\eta_i}$ for $i=1,\ldots,m$.
$\left\Vert\sum_i\overline{A\eta_i}\otimes\eta_i\right\Vert=\left(\sum_i\left\Vert A\eta_i\right\Vert^2\right)^{1/2}=\Vert A\Vert_2$ and 
$\left|f_A\left(\sum_i\overline{A\eta_i}\otimes\eta_i\right)\right|=\Vert A\Vert_2^2=\Vert A\Vert_2\left\Vert\sum_i\overline{A\eta_i}\otimes\eta_i\right\Vert$. Thus we conclude $\Vert f_A\Vert=\Vert A\Vert_2$.
\end{proof}
\begin{lemma}\label{l:my}
Let $A\in\fB(\cK,\cH)$ and $\rank A\geq 2$. Assume that an operator $B\in\fB(\cK,\cH)$ 
satisfies the following condition: for any $\xi\in\cH$ and $\eta\in\cK$ if $\langle\xi,A\eta\rangle=0$ then
$\langle\xi,B\overline{\eta}\rangle=0$. Then $B=0$.
\end{lemma}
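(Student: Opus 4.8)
The plan is to localize the hypothesis at each vector $\eta\in\cK$ and then play the linearity of $A$ against the conjugate-linearity of the involution $\eta\mapsto\overline{\eta}$.

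First I would fix $\eta$ and reinterpret the assumption as a statement about the single vector $B\overline{\eta}\in\cH$. The hypothesis says precisely that every $\xi$ orthogonal to $A\eta$ is orthogonal to $B\overline{\eta}$, i.e. $B\overline{\eta}$ is orthogonal to the hyperplane $\{\xi:\langle\xi,A\eta\rangle=0\}=\{A\eta\}^\perp$. Consequently $B\overline{\eta}\in(\{A\eta\}^\perp)^\perp=\spa\{A\eta\}$ when $A\eta\neq 0$, while $B\overline{\eta}=0$ when $A\eta=0$. In all cases I can therefore write $B\overline{\eta}=c(\eta)\,A\eta$ for a scalar $c(\eta)$, with the convention $c(\eta)=0$ whenever $A\eta=0$.

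The heart of the argument is to show that $\rank A\geq 2$ forces $c\equiv 0$. Using $\rank A\geq 2$, choose $\eta_1,\eta_2$ with $A\eta_1,A\eta_2$ linearly independent. Evaluating the relation at $\eta_1+\eta_2$ and using additivity of $B$ and of the involution gives $c(\eta_1)A\eta_1+c(\eta_2)A\eta_2=c(\eta_1+\eta_2)(A\eta_1+A\eta_2)$, so linear independence yields $c(\eta_1)=c(\eta_2)=:c$. The decisive step is to repeat this with $\eta_1+i\eta_2$: since $\overline{\eta_1+i\eta_2}=\overline{\eta_1}-i\,\overline{\eta_2}$, the left-hand side now equals $c(A\eta_1-iA\eta_2)$, whereas the right-hand side equals $c(\eta_1+i\eta_2)(A\eta_1+iA\eta_2)$; comparing the (independent) coefficients of $A\eta_1$ and $A\eta_2$ forces both $c(\eta_1+i\eta_2)=c$ and $c(\eta_1+i\eta_2)=-c$, hence $c=0$.

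It then remains to propagate $c=0$ to every $\eta$. For an arbitrary $\eta$ with $A\eta\neq 0$, at least one of $A\eta_1,A\eta_2$ is linearly independent from $A\eta$ (otherwise $A\eta_1,A\eta_2$ would both be multiples of $A\eta$, contradicting their independence), so applying the additivity step to that pair gives $c(\eta)=0$; and $c(\eta)=0$ trivially when $A\eta=0$. Hence $B\overline{\eta}=c(\eta)A\eta=0$ for every $\eta$, and since $\overline{\eta}$ runs over all of $\cK$ we get $B=0$. I expect the only genuinely delicate point to be the $\eta_1+i\eta_2$ computation: it is exactly the clash between the linearity of $A$ and the conjugate-linearity of $\eta\mapsto\overline{\eta}$ that prevents a nonzero constant from surviving. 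The rest is bookkeeping, and the hypothesis $\rank A\geq 2$ enters only through the existence of a linearly independent pair $A\eta_1,A\eta_2$.
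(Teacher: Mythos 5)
Your proof is correct. The first step is sound: for fixed $\eta$ the hypothesis says exactly that $\{A\eta\}^\perp\subset\{B\overline{\eta}\}^\perp$, hence $B\overline{\eta}\in\bC A\eta$, and the convention $c(\eta)=0$ when $A\eta=0$ is legitimate because in that case every $\xi$ qualifies and so $B\overline{\eta}=0$. The additivity step, the comparison at $\eta_1+i\eta_2$ (where $\overline{\eta_1+i\eta_2}=\overline{\eta_1}-i\,\overline{\eta_2}$ forces $c=-c$), and the propagation to arbitrary $\eta$ are all airtight; surjectivity of the involution then gives $B=0$.

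Your route differs from the paper's in structure, though the underlying mechanism -- playing the linearity of $A$ and $B$ against the antilinearity of the involution -- is the same. The paper fixes an orthonormal eigenbasis $\overline{\eta_1},\dots,\overline{\eta_m}$ of $A^*A$, shows $B\eta_j=0$ off the support of $A$ and $B\eta_j\in A\cK$ on it, and then, for each pair $j\neq k$, tests the hypothesis against a one-complex-parameter family $\rho_z=\overline{\eta_j}+z\overline{\eta_k}$ with a suitably chosen $\zeta_z\perp A\rho_z$; the resulting expression is a polynomial in $z$ and $\overline{z}$ that vanishes identically, and extracting coefficients kills the inner products $\langle A\overline{\eta_k},B\eta_j\rangle$. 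Your version replaces this continuous-parameter coefficient extraction by the two discrete evaluations at $\eta_1+\eta_2$ and $\eta_1+i\eta_2$, and replaces the eigenbasis bookkeeping by the single global observation $B\overline{\eta}=c(\eta)A\eta$. What your approach buys is economy and transparency: no spectral decomposition of $A^*A$ is needed, the role of $\rank A\geq 2$ is isolated in one place (existence of a pair with independent images), and the contradiction $c=-c$ makes the clash of linearities explicit. The paper's computation, on the other hand, stays entirely inside the language of the orthogonality hypothesis (inner products against explicit test vectors), which matches how the lemma is actually invoked in the proof of the main theorem.
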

\begin{proof}
Let $\eta_1,\eta_2,\ldots,\eta_m$ be such that $\overline{\eta_1},\overline{\eta_2},\ldots,\overline{\eta_m}$ form 
an orthonormal system of eigenvectors of the operator $A^*A$.
Let $r=\rank A$. Thus we may assume that $\overline{\eta_{r+1}},\ldots,\overline{\eta_m}$ correspond to zero 
eigenvalue while
$\overline{\eta_1},\ldots,\overline{\eta_r}$ correspond to non-zero eigenvalues. Thus vectors $A\overline{\eta_1},\ldots,A\overline{\eta_r}$ span the image of the operator $A$.
Take any $j\in\{r+1,\ldots,m\}$.
Observe that $A\overline{\eta_j}=0$, so $\langle\xi,A\overline{\eta_j}\rangle=0$ for each $\xi\in\cH$. 
It follows from the assumption of the
Lemma that $\langle\xi,B\eta_j\rangle=0$ for any $\xi\in\cH$, so $B\eta_j=0$.
Now, let $j\in\{1,\ldots,r\}$. It follows from the assumption that for any $\xi\in A\cK^\bot$ we have $\langle\xi,B\eta_j\rangle=0$. Hence $B\eta_j\in A\cK$.
Now consider also some $k\in\{1,\ldots,r\}$ such that $k\neq j$. For every $z\in\bC$ define 
$$\zeta_z=-\overline{z}\Vert A\overline{\eta_k}\Vert^2 A\overline{\eta_j}+\Vert A\overline{\eta_j}\Vert^2 A\overline{\eta_k},\quad
\rho_z=\overline{\eta_j}+z\overline{\eta_k}.$$
Observe that $\langle\zeta_z,A\rho_z\rangle=0$. On the other hand we have
\begin{eqnarray*}
\langle\zeta_z,B\overline{\rho_z}\rangle
&=&-z\Vert A\overline{\eta_k}\Vert^2\langle A\overline{\eta_j},B\eta_j\rangle-|z|^2\Vert A\overline{\eta_k}\Vert^2\langle A\overline{\eta_j},B\eta_k\rangle+\\
&&+\Vert A\overline{\eta_j}\Vert^2\langle A\overline{\eta_k},B\eta_j\rangle+
\overline{z}\Vert A\eta_j\Vert^2\langle A\overline{\eta_k},B\eta_k\rangle.
\end{eqnarray*}
It follows from the assumption that this expression is equal to zero for every $z\in\bC$. 
Thus we conclude that $\langle A\overline{\eta_k},B\eta_j\rangle=0$ for any $k=1,\ldots,r$.
Since $B\eta_j\in A\cK$ and $A\overline{\eta_1},\ldots,A\overline{\eta_r}$ span $A\cK$, we conclude
that $B\eta_j=0$.

Thus we proved that $B\eta_j=0$ for any $j=1,2,\ldots,m$. The Lemma follows from the fact that $\eta_1,\ldots,\eta_m$ is a basis of $\cK$.
\end{proof}

\begin{proof}[Proof of Theorem \ref{t}]
Let us consider a map $\phi\in C$. 
It follows from Lemma \ref{l:Kye} that $\phi$ is an exposed point if and only if $\{\phi\}''=\overline{\bR_+}\phi$.
Let us calculate the face $\{\phi\}'\subset D$ firstly. Since any closed convex cone in finite dimensional space $W$
is a closed convex hull of its extremal elements, in order to determine the face $\{\phi\}'$ it is enough to describe
its extremal elements. They are those elements of $\{\phi\}'$ which are extremal in $D$ (cf. Lemma \ref{l:extF}).
Thus, one should find all pairs $(\xi,\eta)\in\cH\times\cK$ such that $\xi\xi^*\otimes\eta\eta^*\in\{\phi\}'$.
An easy calculation shows that this holds if and only if
$\langle\overline{\xi},\phi(\eta\eta^*)\overline{\xi}\rangle=0$ or, 
equivalently, $\phi(\eta\eta^*)\overline{\xi}=0$. As a consequence we get the following characterization
of the face $\{\phi\}''$: if $\psi\in C$, then $\psi\in\{\phi\}''$ if and only if $\psi(\eta\eta^*)\overline{\xi}=0$
for all pairs $(\xi,\eta)\in\cH\times\cK$ such that $\phi(\eta\eta^*)\overline{\xi}=0$.

Now, assume that $\phi(X)=AXA^*$ where $A$ is some linear map from $\cK$ into $\cH$. 
One can easily show that $\xi\xi^*\otimes\eta\eta^*\in\{\phi\}'$ if and only if $\langle\overline{\xi},A\eta\rangle=0$. We will show that for $\psi\in C$
if $\psi\in\{\phi\}''$ then $\psi$ is rank 1 non-increasing in the sense of \cite{Mar09}. 
Let $\eta\in\cK$. Consider any $\xi\in\cH$ such that
$\xi\bot A\eta$. Then $\overline{\xi}\overline{\xi}^*\otimes\eta\eta^*\in\{\phi\}'$ what is equivalent to
$\phi(\eta\eta^*)\xi=0$. So, it follows from the preceding paragraph that $\psi(\eta\eta^*)\xi=0$ for any $\xi\in\{A\eta\}^\bot$. Thus $\psi(\eta\eta^*)$ is a non-negative multiple of rank 1 positive element $(A\eta)(A\eta)^*$.

Now, it follows from Theorem 2.2 in \cite{Mar09} that we have three possibilities:
\begin{enumerate}
\item[(i)] $\psi(X)=\omega(X)Q$ for some positive functional $\omega$ on $\fB(\cK)$ and a $1$-dimensional projection $Q$ on $\cH$,
\item[(ii)] $\psi(X)=BXB^*$ for some $B\in\fB(\cK,\cH)$,
\item[(iii)] $\psi(X)=BX^{\mathrm{T}}B^*$ for some $B\in\fB(\cK,\cH)$.
\end{enumerate}
Assume firstly that $\psi$ satisfies the condition (ii), i.e. $\psi(X)=BXB^*$ for some $B\in\fB(\cK,\cH)$.
From the above considerations we conclude that $B$ satisfies the following condition: for any $(\xi,\eta)\in\cH\times\cK$, if $\langle\overline{\xi},A\eta\rangle=0$ then $\langle\overline{\xi},B\eta\rangle=0$.
Now apply Lemma \ref{l:1to1}. Using notations introduced there we can write the above condition as 
$\ker f_A\subset\ker f_B$. It is equivalent to the fact that $f_B$ is a multiple of $f_A$. Hence $B=\lambda A$
for some $\lambda\in\bC$, and $\psi=|\lambda|^2\phi$, and consequently $\psi\in\overline{\bR_+}\phi$.

Secondly, consider the case (i), i.e. let $\psi(X)=\omega(X)Q$, where $\omega$ is some positive functional on $\fB(\cK)$
and $Q$ is a $1$-dimensional projection on $\cH$.
Let $\omega(X)=\Tr(RX)$ where $R$ is some positive operator on $\cK$ and let $Q=\zeta\zeta^*$ for some non-zero $\zeta\in\cH$.
We will show that $\rank R\leq 1$. 
To this end observe that the condition $\psi\in\{\phi\}''$ is equivalent to the following:
for any $(\xi,\eta)\in\cH\times\cK$, if $\langle\xi,A\eta\rangle=0$ then $\langle\eta,R\eta\rangle=0$ or $\langle\zeta,\xi\rangle=0$.
It follows that $\langle\eta,R\eta\rangle=0$ provided that $A\eta=0$, hence $\ker A\subset\ker R$.
Assume that there are two vectors $\eta_1,\eta_2\in\cK$ such that $R\eta_1,R\eta_2$ are linearly independent.
Then $A\eta_1,A\eta_2$ are also linearly independent. 
Fix $i\in\{1,2\}$. Since $\psi\in\{\phi\}''$,
for any $\xi\in\{A\eta_i\}^\bot$ we have $\langle\eta_i,R\eta_i\rangle=0$ or $\langle\zeta,\xi\rangle\zeta=0$. We assumed $\langle\eta_i,R\eta_i\rangle\neq 0$, so we conclude that $\zeta\bot\xi$. Thus we proved
that $\zeta\in\bC A\eta_i$ for $i=1,2$. But $A\eta_1,A\eta_2$ are independent, so $\zeta=0$ which is a contradiction
to the assumption. Thus we proved that $R=\rho\rho^*$ for some $\rho\in\cK$. Now, we can write 
$\phi(X)=\zeta\rho^*X\rho\zeta^*=\zeta\rho^*X(\zeta\rho^*)^*$ and we arrived at the previously described case (ii).

Finally, assume that $\phi(X)=CX^{\mathrm{T}}C^*$ for some $C\in\fB(\cK,\cH)$.
Observe that in this case the condition $\psi(\eta\eta^*)\overline{\xi}=0$ is equivalent to $\langle\overline{\xi},C\overline{\eta}\rangle=0$. Thus $\psi\in\{\phi\}''$ if and only if 
for any $(\xi,\eta)\in\cH\times\cK$, $\langle\overline{\xi},A\eta\rangle=0$ implies $\langle\overline{\xi},C\overline{\eta}\rangle=0$. It follows from Lemma \ref{l:my}
that if $\rank A\geq 2$ then $C=0$. It remains to consider the case when $\rank A\leq 1$. If $A\eta=0$ then $C\overline{\eta}=0$, hence $\overline{\ker A}\subset\ker C$, and consequently $\rank C\leq 1$.
Then $C=\zeta\rho^*$ for some $\zeta\in\cH$ and $\rho\in\cK$. Hence $\psi(X)=\langle\rho,X^{\mathrm{T}}\rho\rangle\zeta\zeta^*$. Since $X\mapsto\langle\rho,X^{\mathrm{T}}\rho\rangle$
is a positive functional on $\fB(\cK)$, we came to the case (i).
This ends the proof for $\phi(X)=AXA^*$.

If $\phi(X)=AX^{\mathrm{T}}A^*$ then the proof is similar. To show that any $\psi\in\{\phi\}''$ is a multiple
of $\phi$ one should firstly show that $\psi$ is rank 1 non-increasing, then consider cases (iii), (i) and (ii).
\end{proof}


\begin{thebibliography}{99}
\bibitem{Bre06}
H.-P. Breuer, Optimal Entanglement Criterion for Mixed Quantum States, \textit{Phys. Rev. Lett.} \textbf{97} (2006), 080501--080504.

\bibitem{Cho75}
M.-D. Choi, Positive semidefinite biquadratic forms,
\textit{Linear Algebra Appl.} \textbf{12} (1975), 95--100.

\bibitem{Chr11}
D. Chru{\'s}ci{\'n}ski, On exposed positive maps: Robertson and Breuer-Hall maps, preprint arXiv:1108.2233.

\bibitem{ChS12a}
D. Chru{\'s}ci{\'n}ski and G. Sarbicki, Exposed positive maps: a sufficient condition, \textit{J. Phys. A: Math. Theor.} \textbf{45} (2012), 115304.

\bibitem{ChS12b}
D. Chru{\'s}ci{\'n}ski and G. Sarbicki, Exposed positive maps in $M_4(\bC)$, preprint arXiv:1204.3109.

\bibitem{Kye00}
M.-H. Eom and S.-H. Kye, Duality for positive linear maps in matrix algebras, \textit{Math. Scand.} \textbf{86} (2000), 130--142.

\bibitem{Ha03}
K.-C. Ha, A class of atomic positive linear maps in matrix algebras, \textit{Lin. Alg. Appl.} \textbf{359} (2003), 277.

\bibitem{HK11}
K.-C. Ha and S.-H. Kye, Entanglement witnesses arising from exposed positive linear maps, \textit{Open Syst. Inf. Dyn.} \textbf{18} (2011), 323--337.

\bibitem{Hal06}
W. Hall, A new criterion for indecomposability of positive maps, \textit{J.~Phys. A: Math. Gen.} \textbf{39} (2006), 14119.

\bibitem{MM01}
W. A. Majewski and M. Marciniak, On a characterization of positive maps, \textit{J.~Phys A: Math. Gen.} \textbf{34} (2001), 5863--5874.

\bibitem{MM05}
W. A. Majewski and M. Marciniak, On decomposability of positive maps between $M_2(\bC)$ and $M_{n+1}(\bC)$, \textit{Quantum Probability and Infinite Dimensional Analysis}, Proceedings of the 26th Conference, Levico, Italy, 20 - 26 February 2005, World Scientific, 2007, p. 308.

\bibitem{Mar09}
M. Marciniak, On extremal positive maps acting on type I factors, \textit{Banach Center Publ.} \textbf{89}, 201--221.

\bibitem{Rob85}
A. G. Robertson, Positive projections on $C^*$-algebras and an extremal positive map, \textit{J.~London Math. Soc.} \textbf{32}(2) (1985), 133.

\bibitem{Roc70}
R. T. Rockafellar, \textit{Convex Analysis}, Princeton University Press, 1970.

\bibitem{SCh12}
G. Sarbicki and D. Chru{\'s}ci{\'n}ski, A class of exposed indecomposable positive maps,
arXiv:1201.5995.

\bibitem{Sto63} 
E.~St{\o}rmer, 
Positive linear maps of operator algebras,
\textit{Acta Math.} \textbf{110}~(1963), 233--278.

\bibitem{Sto86}
E. St{\o}rmer, Extensions of positive maps into $B(\cH)$, \textit{J. Funct. Anal.} \textbf{40} (1986), 235--254.

\bibitem{Str35}
S. Straszewicz, {\"U}ber exponierte Punkte abgeschlossener Punktmengen, \textit{Fund. Math.} \textbf{24} (1935),
139--143.

\bibitem{Wor76}
S. L. Woronowicz, Nonextendible positive maps, \textit{Comm. Math. Phys.} \textbf{51} (1976), 243--282.

\bibitem{YH05} D. A. Yopp and R. D. Hill, Extremals and exposed faces of the cone of positive maps,
\textit{Lin. Multilin. Alg.} \textbf{53} (2005), 167--174.

\end{thebibliography}
\end{document}